\title{\LARGE \bf
Solvability regions of affinely parameterized quadratic equations
}
\author{Krishnamurthy Dvijotham$^{1}$, Hung Nguyen$^{2}$ and Konstantin Turitsyn$^{2}$
\thanks{*KD was supported by project 1.10 in the Control of Complex Systems Initiative, a Laboratory Directed Research and Development (LDRD) program at the Pacific Northwest National Laboratory.  KT  and  HN  weresupported by the NSF awards 1554171 and 1550015.}
\thanks{$^{1}$Krishnamurthy Dvijotham is with the Department of Mathematics, Washington State University and with the Optimization and Control group, Pacific Northwest National Laboratory, Richland, WA 99354, USA
        {\tt\small dvij@cs.washington.edu}}%
\thanks{$^{2}$Hung Nguyen and Konstantin Turitsyn are with the Department of Mechanical Engineering, Massachusetts Institute of Technology,
        Cambridge, MA 02139, USA
        {\tt\small hungtd@mit.edu,turitsyn@mit.edu}}%
}
\def\R{{\mathbb R}}
\def\_#1{{\underline{#1}}}
\newtheorem{theorem}{Theorem}
\newtheorem{lemma}{Lemma}
\newtheorem{definition}{Definition}
\newcommand{\br}[1]{\left({#1}\right)}
\newcommand{\norm}[1]{\left\|{#1}\right\|}
\newcommand{\qud}[3]{\left({#1},{#2},{#3}\right)}
\newcommand{\Ucal}{\mathcal{U}}
\newcommand{\inv}[1]{{\br{#1}}^{-1}}
\newcommand{\herm}[1]{\overline{#1}}
\newcommand{\Com}{\mathbb{C}}
\newcommand{\diag}[1]{\left[\!\left[{#1}\right]\!\right]}
\DeclareMathOperator*{\cupstar}{\cup}
\begin{document}
\maketitle
\thispagestyle{empty}
\pagestyle{empty}

\begin{abstract}Quadratic systems of equations appear in several applications. The results in this paper are motivated by quadratic systems of equations that describe equilibrium behavior of  physical infrastructure networks like the power and gas grids. The quadratic systems in infrastructure networks are parameterized - the parameters can represent uncertainty (estimation error in resistance/inductance of a power transmission line, for example) or controllable decision variables (power outputs of generators, for example). It is then of interest to understand conditions on the parameters under which the quadratic system is guaranteed to have a solution within a specified set (for example, bounds on voltages and flows in a power grid). Given nominal values of the parameters at which the quadratic system has a solution and the Jacobian of the quadratic system at the solution is non-singular, we develop a general framework to construct convex regions around the nominal value such that the system is guaranteed to have a solution within a given distance of the nominal solution. We show that several results from recent literature can be recovered as special cases of our framework, and demonstrate our approach on several benchmark power systems.
\end{abstract} 
\section{Introduction}
In this paper, we study systems of affinely parameterized quadratic equations $f(x,u)=0$ where $f$ is vector-valued function that is quadratic in $x$ and affine in $u$. We are interested in the set of $u$ for which the system of equations $f(x,u)=0$ is guaranteed to have a solution for $x$: we denote this set by  $\Ucal$. Due to the nonlinearity of $f$, $\Ucal$ is nonconvex in general and difficult to characterize exactly. In this paper, we aim to find \emph{tractable (convex) sufficient } conditions on $u$ that guarantee existence of solutions.

This question is motivated by systems of equations that describe the equilibrium behavior of infrastructure systems like the power grid or the gas grid. In the power grid, the physical state $x$ corresponds to complex voltage phasors at every node in the network and the parameters $u$ represent controllable decisions (power generation at conventional gas-fired/coal-fired generators) or uncertain parameters (power generation from solar or wind farms, impedances of transmission lines etc.). In the gas grid, $x$ represents pressures at nodes in the network and $u$ represents compressor gains or gas withdrawal/production. Network operators need to ensure that the steady-state equations have a solution for $x$ (voltages or pressures) within acceptable limits (voltage/pressure limits). Thus, it is of interest to find conditions on $u$ that guarantee the existence of a solution to $f(x,u)=0$ for $x$ within given limits.   The conditions can be used for several applications, including robust feasibility assessment (given an uncertainty set for $u$, assess whether the solution $x$ will remain within limits for all values of $u$ in the uncertainty set) and robust optimization (choosing values for controllable parts $u^c$ of $u=u^c+\omega$ so as to ensure robust feasibility with respect to all realizations of uncertainty $\omega$).

Methods from algebra and constraint programming \cite{arnon1984cylindrical}\cite{goldsztejn2006inner}\cite{franek2015robust}\cite{franek2016robustness} study this problem, but typically rely on a discretization of the state space (simplicial or box decomposition) that grows exponentially with the problem dimension. Algorithms developed in the numerical analysis community \cite{frommer2007proving}\cite{beelitz2009framework} verify the existence of a solution for a given value of $u$ but do not easily extend to computing inner approximations of $\mathcal{U}$. There is work in the field of polynomial optimization \cite{lasserre2009moments} that studies existence of solutions to polynomial equations. The most relevant to our setting is the work in  \cite{lasserre2015tractable}, where a general framework is proposed to study feasible sets of polynomial systems with quantifiers. However, the approach can only compute \emph{outer} approximations of $\mathcal{U}$. Further, the framework requires solution of a large semidefinite programs and is not suitable for fast computations required in applications like the power grid. In previous work \cite{dvijotham2015construction}, we developed a framework based on polynomial optimization to check whether a given set $S$ is contained in $\Ucal$ - however, this framework also requires solving a large SDP to perform the check and further requires $S$ to be pre-specified (in other words it cannot automatically construct $S$). 

Several papers have studied the specific quadratic equations arising in power systems (the AC power flow equations). In \cite{louca2015acyclic}\cite{louca2016stochastic}, the authors propose approaches to develop inner approximations of quadratically constrained quadratic programs arising in power systems. However, their approach cannot handle nonlinear equality constraints (only inequalities). \cite{bolognani2016existence}\cite{wang2016explicit}\cite{yu2015simple} construct explicit convex inner approximations of $\Ucal$ under some conditions. However, the results obtained in these papers can be conservative and cannot handle all the parameters in the power flow equations (they typically only account for power injections in $u$, all other parameters like line impedances etc. are treated as fixed quantities). Further, they are restricted to power distribution systems and do not extend to power transmission systems.

In this paper, we establish a novel framework capable of:
\begin{itemize}
\item[(1)] Producing \emph{explicit convex inner approximations}  of $\Ucal$ (stated as explicit convex conditions and do not require solution of an optimization problem).
\item[(2)] Handling arbitrary quadratic equations with affine parameters appearing in the equations in the quadratic, linear and constant terms.
\item[(3)] Providing tightness guarantees on our inner approximations (i.e, a measure of how far the inner approximations are from the boundary of $\mathcal{U}$).
\end{itemize}

The rest of this paper is organized as follows: In section \ref{sec:MathFormulation}, we formulate the problem mathematically and introduce the relevant background. In section \ref{sec:MainResults}, we present our main results on constructing tractable inner approximations of $\mathcal{U}$ and theoretical results on tightness of the inner approximations.  In section \ref{sec:NumRes}, we present numerical evaluations on a number of IEEE power system benchmarks and in section \ref{sec:Conc}, we wrap up with conclusions and directions for future work.

\section{Background and Mathematical formulation}\label{sec:MathFormulation}
We begin by introducing notation that will be used in this paper - $\R$ denotes the set of real numbers and $\Com$ the set of complex numbers. $\herm{x}$ denotes the conjugate of the complex number $x$. $\mathrm{Int}\br{S}$ denotes the interior of $S$ for any set $S \subseteq \R^n$ and is defined as
\[\left\{x \in S: \exists \epsilon>0 \text{ such that } \{y:\norm{y-x}_2\leq \epsilon\}\subseteq S\right\}\]
We use $\norm{\cdot}$ to denote an arbitrary norm on $\R^n$ (or $\Com^n$). Given a matrix $M\in\R^{n \times n}$ (or $\Com^{n \times n}$). $\norm{M}=\max_{\norm{x}\leq 1}\norm{Mx}$ denotes the induced matrix norm. Specific examples in the paper use the $\infty$ norm:$\norm{x}_\infty=\max_i |x_i|$ for $x \in \R^n$ (or $\Com^n$) and $\norm{M}_\infty=\max_i \br{\sum_j |M_{ij}|}$ for $M\in \R^{n \times n}$ (or $\Com^{n \times n}$).

We study systems of quadratic equations that are parameterized affinely in a parameter $u$:
\begin{align}
f\br{x,u}=\underbrace{Q\qud{x}{x}{u}}_{\text{Quadratic terms}}+\underbrace{L\br{x,u}}_{\text{Linear terms}}+\underbrace{K\br{u}}_{\text{Constant terms}}=0 \label{eq:Quad}
\end{align}
where $Q: \R^n \times \R^n \times \R^k \mapsto \R^n$ is a tri-linear function (linear in each of its inputs for fixed values of the other inputs) with $Q\qud{0}{0}{u}=0 \quad \forall u \in \R^k$ and represents the quadratic terms in $x$, $L:\R^n \times \R^k \mapsto \R^n$ is a bi-linear function with $L(0,u)=0 \quad \forall u\in\R^k$ that  represents the linear terms in $x$, and $K: \R^n\mapsto \R^n$ is a linear function represents the constant terms in $x$. We view \eqref{eq:Quad} as a system of quadratic equations in $x$ parameterized by $u$. Given a \emph{nominal solution} $\br{x^\star,u^\star}$ satisfying \eqref{eq:Quad}, we define conditions on $u$ that guarantee the existence of a solution for $x$ within a certain distance of $x^\star$. The conditions we construct involve the Jacobian of the system $f$ with respect to $x$. In our setup, it is convenient to define the Jacobian via its action on a vector $y\in \R^n$:
\begin{subequations}
\begin{align}
J\br{x,u;y} & =\left.\frac{\partial f}{\partial x}\right|_{x,u}y = 2Q\qud{x}{y}{u}+L\br{y,u}\\
J_\star\br{u;y} & =\left.\frac{\partial f}{\partial x}\right|_{x=x^\star,u}y,J_\star\br{u^\star;y}=J_{\star\star}y
\end{align}
\end{subequations}
where $\br{x^\star,u^\star}$ denote a \emph{nominal solution} of \eqref{eq:Quad}. This captures how the Jacobian changes with $x$ and $u$ and with respect to $u$ for a fixed $x^\star$.

We now consider the specific cases of power flow and gas flow equations and show how they are special cases of the abstract model \eqref{eq:Quad}:
\paragraph{Power flow} The AC power flow equations characterize the steady state of the power grid and can be written as
\begin{align}
\sum_{k=1}^n {\herm{Y_{ik}}}V_i\herm{V_k}+Y_{i0}V_0=s_i,i=1,\ldots,n \label{eq:PF}
\end{align}
where $V_i$ denotes complex voltage phasor at node $i$, $Y$ denotes the admittance matrix and $s$ denotes complex power injections in the grid. The node $0$ is called the \emph{slack bus} and has a fixed reference voltage $V_0$. We can rewrite \eqref{eq:PF} in the form \eqref{eq:Quad} with $x$ denoting the real and imaginary parts of $V$ (since the equation is quadratic in $V$) and $u$ denoting real and imaginary parts of $s$ and $Y$ (since $s$ and $Y$ appear linearly). We seek conditions on $s$ and $Y$ that guarantee existence of a solution $V$ to to \eqref{eq:PF} - this is referred to as \emph{voltage stability} in the power systems literature. Non-existence of a power flow solution leads to \emph{voltage collapse} where the voltages in the grid drop rapidly, causing devices to trip out and leading to a major partial or complete blackout. Thus, it is of interest to ensure that $u$ is such that the system of equations has a solution for $x$.
\paragraph{Gas flow} The steady-state gas-flow equations can be written as follows \cite{DjGas}:
\begin{subequations}\label{eq:GF}\label{eq:GF}
\begin{align}
&q_i=\sum_{k} \phi_{ik} \\
&\alpha_{ik}\br{\pi_i-r_{ij}\lambda_{ij}\psi_{ij}}=\pi_j+(1-r_{ij})\lambda_{ij}\phi_{ij}\psi_{ij} \\
&\psi_{ij}^2=\phi_{ij}^2 \\
&\psi\geq 0,\pi\geq 0
\end{align}
\end{subequations}
where $q$ denotes production or withdrawal of gas, $\pi_i$ the pressure at node $i$, $\phi_{ij}$ denotes flow of gas on pipeline $\br{i,j}$, $\psi=|\phi|$ denotes the absolute value of flow, $r_{ij},\lambda_{ij}$ are parameters of the pipeline and $\alpha$ denotes the compressor gain of a compressor on the pipeline. This can also be put into the form \eqref{eq:Quad} with $x=\br{\pi,\phi,\psi}$ and $u=\br{q,\alpha}$ - again it is of interest to understand under what conditions on $u$ does \eqref{eq:GF} have a solution within limits on $\pi,\psi$ (pressure/flow limits) - contracts between gas operators and local utilities typically require that the pressure is above a minimum threshold and safety considerations require that the pressure is below some maximum limit (beyond which pipelines may leak or even burst). 

The main problem we solve is defined below:
\begin{definition}[Problem Statement]\label{def:ProbStatement}
Given $\br{x^\star,u^\star}$ satisfying \eqref{eq:Quad}, a norm $\norm{\cdot}$ and a positive number $r>0$, construct a region $\Ucal^i_r\subseteq \R^k$ such that
\begin{align*}
& u^\star \in \mathrm{Int}\br{\Ucal^i_r}\\
& \Ucal^i_r \subseteq \Ucal_r=\left\{u: \exists x:\norm{x-x^\star}\leq r \text{ such that } f(x,u)=0\right\}
\end{align*}
\end{definition}
Note that we assume that the constraints on $x$ are always given by a norm ball around the nominal solution. While this may not be true in general, for typical constraints in practical applications (for example, linear constraints $Ax\leq b$), given any $x^\star$ that is \emph{strictly feasible} ($Ax^\star\leq b$), it is easy construct an $r$ such that a norm-ball of radius $r$ around $x^\star$ is contained in $\{x:Ax\leq b\}$.

Our main tool for computing inner approximations of $\Ucal$ is Brouwer's fixed-point theorem, which provides a sufficient condition for the existence of solutions to nonlinear systems within convex sets.
\begin{theorem}[Brouwer's fixed-point theorem]\label{thm:Brouwer}
Let $F: \R^n \mapsto \R^n$ (or $\Com^n \mapsto \Com^n$) be a continuous mapping and $S \subset \R^n$ (or $\Com^n$) be a compact convex set. Then, if
$F\br{y} \in S \quad \forall y \in S $, $F\br{y}=y$ has a solution in $S$.
\end{theorem}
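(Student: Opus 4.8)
This statement is the classical Brouwer fixed-point theorem, so the plan is to reduce the general compact convex case to the standard case of a closed Euclidean ball and then invoke the topological heart of the result. First I would reduce to the ball. Assuming $S$ nonempty, it is compact and convex, so it lies inside some closed ball $B$ of large radius. The nearest-point projection $\pi\br{y}=\argmin_{z\in S}\norm{y-z}_2$ onto a closed convex set is well defined and continuous (indeed nonexpansive in the Euclidean norm), and satisfies $\pi\br{y}=y$ for every $y\in S$; since the existence of a fixed point is a topological fact independent of which (equivalent) norm we place on $\R^n$, working in $\norm{\cdot}_2$ here is harmless. Consider the composite $G=F\circ\pi:B\mapsto B$, which is continuous and, because $F$ maps $S$ into $S$, actually maps $B$ into $S\subseteq B$. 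If $G$ has a fixed point $y^\star\in B$, then $y^\star=F\br{\pi\br{y^\star}}\in S$; since $y^\star\in S$ forces $\pi\br{y^\star}=y^\star$, we conclude $F\br{y^\star}=y^\star$, a fixed point of $F$ in $S$. Thus it suffices to prove the theorem when $S$ is a closed ball (or, after an affine change of coordinates and a standard homeomorphism, a simplex).

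The crux is then the case of the ball, which I would derive from the no-retraction theorem: there is no continuous map $g$ from the closed ball $B$ onto its boundary sphere $\Bd{B}$ that fixes the boundary pointwise. Granting this, suppose for contradiction that a continuous $G:B\mapsto B$ has no fixed point, so $G\br{y}\neq y$ for all $y$. Define a retraction by shooting the ray that starts at $G\br{y}$ and passes through $y$, until it first meets $\Bd{B}$. The resulting map is continuous (the hitting point depends continuously on $y$ precisely because $G\br{y}-y$ never vanishes), maps $B$ onto $\Bd{B}$, and is the identity on $\Bd{B}$, contradicting no-retraction. Hence $G$ must have a fixed point.

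The hard part, and the step I expect to be the main obstacle, is the no-retraction theorem itself: it cannot be obtained from compactness and continuity alone and requires genuine topological input. I would establish it by one of two self-contained routes. The combinatorial route uses Sperner's lemma: triangulating the simplex and coloring each vertex by a coordinate that the candidate map fails to increase, Sperner's lemma guarantees a fully colored sub-simplex in every triangulation; refining the triangulation and passing, by compactness, to a convergent subsequence of these shrinking sub-simplices produces a point that the map must fix. The analytic route (Milnor's argument) instead approximates $G$ uniformly by a smooth map, assumes no fixed point to build a smooth retraction $g:B\mapsto\Bd{B}$, and studies the polynomial $p\br{t}=\int_B \det\br{D\br{\br{1-t}\,\mathrm{id}+t\,g}}\,dy$; one shows $p$ equals the positive volume of $B$ for small $t$ (where the integrand map is a diffeomorphism of $B$) yet must vanish at $t=1$ (where the image lies in the sphere, forcing a singular Jacobian), which is impossible for a polynomial that is constant on an interval. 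Either route supplies the nontrivial ingredient, after which the reductions above are elementary.
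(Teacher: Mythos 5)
The paper never proves this statement: Theorem \ref{thm:Brouwer} is the classical Brouwer fixed-point theorem, stated as background and used as a black-box tool in the proofs of Lemma \ref{lem:MainCert} and Theorem \ref{thm:MainCert}, so there is no in-paper proof to compare against. Your proposal supplies the standard textbook proof, and its architecture is sound: reduce the compact convex case to the ball via the (nonexpansive, hence continuous) nearest-point projection, reduce the ball case to the no-retraction theorem via the ray construction, and obtain the topological core from Sperner's lemma or Milnor's determinant-integral argument; the complex case is covered by identifying $\Com^n$ with $\R^{2n}$, and you correctly flag that $S$ must be nonempty (the paper's statement tacitly assumes this). Two sketch-level imprecisions are worth fixing in a careful write-up. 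First, your ray construction --- ``the ray that starts at $G\br{y}$ and passes through $y$, until it \emph{first} meets the boundary'' --- fails when $G\br{y}$ itself lies on the sphere (the first intersection is then $G\br{y}$, not $y$, so the map need not fix the boundary); the correct choice is the \emph{exit} point, i.e., the unique intersection $G\br{y}+t\br{y-G\br{y}}$ with $t\geq 1$, which is continuous in $y$ whenever $G\br{y}\neq y$. Second, your combinatorial route as described (coloring vertices by a coordinate the map fails to increase and extracting a shrinking sequence of panchromatic simplices) proves Brouwer's theorem directly on the simplex rather than proving the no-retraction theorem; that is perfectly adequate --- it simply replaces the no-retraction step rather than establishing it --- but the framing should say so. With those repairs the argument is complete, and it buys the paper self-containedness at the cost of a page of classical topology that the authors reasonably chose to cite instead.
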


\section{Inner approximations of the solvability region}\label{sec:MainResults}
In this section, we state our main theoretical results characterizing inner approximations of $\Ucal$. We then discuss tightness of these inner approximations (how far they are from the boundary of $\Ucal$) and how they relate to previous results in the literature. We begin by stating our main result: Given any nominal solution to the quadratic system with non-singular Jacobian, we can construct a family of convex regions satisfying the requirements of definitions of \ref{def:ProbStatement}. We then discuss tightness guarantees for our certificate (showing that under certain assumptions, that our sufficient conditions are ``close'' to necessary) and compare our results to previous work in the context of AC power flow and finally discuss computational issues.

Our main result shows that given \emph{any} solution of $\br{x^\star,u^\star}$ of $f\br{x,u}=0$ with non-singular Jacobian $J_{\star\star}$, we can construct a family of convex regions containing $u^\star$ for which existence of a solution within distance $r$ from $x^\star$ is guaranteed. We start by providing a brief outline of the proof for a simple case to illustrate the arguments behind the theorem. Consider a quadratic system of the form $Q(x,x)+x+u=0$ with $x^\star=0,u^\star=0$ as the nominal solution. Our strategy is to apply Brouwer's fixed point theorem (theorem \ref{thm:Brouwer}) to the fixed point system $x=T(x)=-(Q(x,x)+u)$. We try to prove $T$ maps a convex set of the form $\{x:\norm{x}\leq r\}$ onto itself. In order to do this, we need to prove that
$\norm{Q(x,x)+u}\leq r \quad \forall x:\norm{x}\leq r$, or \\
$\displaystyle\max_{\norm{x} \leq r}\norm{Q(x,x)+u}\leq r^2 \br{\displaystyle\max_{\norm{x}\leq 1} \norm{Q(x,x)}}+\norm{u} \leq r$ \\
where we used the triangle inequality and the fact that $Q\br{rx,rx}=r^2Q\br{x,x}$. 
Dividing by $r$ throughout, we get \\
$\displaystyle\max_{\norm{x} \leq r}\norm{Q(x,x)+u}\leq r \br{\displaystyle\max_{\norm{x}\leq 1} \norm{Q(x,x)}}+\frac{\norm{u}}{r} \leq 1$.\\
This establishes a convex condition on $u$ that guarantees existence of a solution in the set $\{x: \norm{x}\leq r\}$. The following lemma that generalizes the above argument:
\begin{lemma}\label{lem:MainCert}
Let $\br{x^\star,u^\star}$ satisfy \eqref{eq:Quad} and suppose that $J_{\star\star}$ is invertible. Define the following quantities:
\begin{align*}
e\br{u-u^\star} & =\norm{\inv{J_{\star\star}}\br{f\br{x^\star,u}-f\br{x^\star,u^\star}}}\\
h\br{u} & =\max_{\norm{y}\leq 1} \norm{ \inv{J_{\star\star}}Q\qud{y}{y}{u}}, \\
 g\br{u-u^\star} & =\max_{\norm{y}\leq 1} \norm{\inv{J_{\star\star}}J_\star\br{u;y}-y}.
\end{align*}
Then for any $r>0$, \eqref{eq:Quad} has a solution in the set $\{x:\norm{x-x^\star}\leq r\}$ if
\begin{align}
r h\br{u}+ g\br{u-u^\star}+\frac{1}{r}e\br{u-u^\star}	\leq 1 \label{eq:rd}
\end{align}
\end{lemma}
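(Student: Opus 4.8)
The plan is to apply Brouwer's fixed-point theorem (Theorem \ref{thm:Brouwer}) to a preconditioned fixed-point reformulation of \eqref{eq:Quad}, chosen so that the three error terms in \eqref{eq:rd} arise naturally. First I would change variables to the deviation $z = x - x^\star$ and use the exact expansion
\begin{align*}
f\br{x^\star+z,u} = f\br{x^\star,u} + J_\star\br{u;z} + Q\qud{z}{z}{u},
\end{align*}
which is exact because $f$ is quadratic in $x$ (the part linear in $z$, namely $Q\qud{x^\star}{z}{u}+Q\qud{z}{x^\star}{u}+L\br{z,u}$, equals $J_\star\br{u;z}$ by the definition of the Jacobian, and the part independent of $z$ is $f\br{x^\star,u}$). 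Applying $\inv{J_{\star\star}}$ and adding and subtracting $z$ turns $f\br{x^\star+z,u}=0$ into $z=T\br{z}$ where
\begin{align*}
T\br{z} = -\inv{J_{\star\star}}f\br{x^\star,u} - \br{\inv{J_{\star\star}}J_\star\br{u;z}-z} - \inv{J_{\star\star}}Q\qud{z}{z}{u}.
\end{align*}
By construction the fixed points of $T$ are exactly the solutions of \eqref{eq:Quad} in the $z$-coordinate.

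Next I would show that $T$ maps the closed ball $S=\{z:\norm{z}\leq r\}$ into itself. The set $S$ is compact and convex and $T$ is continuous (polynomial in $z$), so the remaining work is the bound $\norm{T\br{z}}\leq r$ for all $\norm{z}\leq r$, which I would obtain by bounding the three summands separately with the triangle inequality. For the constant term, since $f\br{x^\star,u^\star}=0$ we have $\inv{J_{\star\star}}f\br{x^\star,u}=\inv{J_{\star\star}}\br{f\br{x^\star,u}-f\br{x^\star,u^\star}}$, whose norm is exactly $e\br{u-u^\star}$. For the linear term, the map $y\mapsto \inv{J_{\star\star}}J_\star\br{u;y}-y$ is linear, so by homogeneity of the norm its value at $z$ is at most $\norm{z}$ times its maximum over the unit ball, giving at most $r\,g\br{u-u^\star}$; here I would also note that $J_\star\br{u;\cdot\,}$ is linear in $u$, so that $\inv{J_{\star\star}}J_\star\br{u;y}-y=\inv{J_{\star\star}}J_\star\br{u-u^\star;y}$, justifying writing $g$ as a function of $u-u^\star$. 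For the quadratic term, $Q\qud{z}{z}{u}$ is homogeneous of degree two in $z$, so its preconditioned norm is at most $\norm{z}^2 h\br{u}\leq r^2 h\br{u}$.

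Combining the three bounds gives $\norm{T\br{z}}\leq e\br{u-u^\star}+r\,g\br{u-u^\star}+r^2 h\br{u}$ for every $z\in S$. Dividing the target inequality $\norm{T\br{z}}\leq r$ through by $r$ shows that condition \eqref{eq:rd} is precisely what guarantees $T\br{S}\subseteq S$, and Brouwer's theorem then yields a fixed point $z\in S$, i.e. a solution $x=x^\star+z$ of \eqref{eq:Quad} with $\norm{x-x^\star}\leq r$. The only genuinely delicate step is the design of $T$: the add-and-subtract manipulation on the Jacobian term is what both makes $\inv{J_{\star\star}}J_{\star\star}=I$ cancel the dominant linear behaviour and isolates the residual $g\br{u-u^\star}$, which measures how far the Jacobian at $u$ deviates from the nominal one. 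Everything else reduces to the triangle inequality and the degree-one and degree-two homogeneity that let the unit-ball maxima defining $h$ and $g$ be rescaled to radius $r$.
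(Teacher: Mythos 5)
Your proof is correct and follows essentially the same route as the paper's: the fixed-point map you call $T$ is exactly the paper's map $G$ (precondition by $\inv{J_{\star\star}}$, add and subtract the identity to isolate the Jacobian mismatch), and the self-mapping bound via the triangle inequality together with the degree-one and degree-two homogeneity of the $g$ and $h$ terms, followed by Brouwer's theorem, is the same argument (the paper just runs it with radius $r\delta$ so the same computation also yields Theorem~\ref{thm:MainCert}). Your explicit justification that $\inv{J_{\star\star}}J_\star\br{u;y}-y=\inv{J_{\star\star}}J_\star\br{u-u^\star;y}$ is a nice touch the paper leaves implicit.
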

\begin{proof}
See appendix section \ref{sec:ProofMainCert}.
\end{proof}

The above results produce a convex condition on $u$ for each value of $r>0$. However, it is not true that the condition being satisfied for $r^\prime>0$ is implied by the condition for $r>r^\prime$. Thus, if one is interested in existence of a solution in the set $\{x:\norm{x-x^\star}\leq r\}$, it makes sense to take the union of the regions defined by plugging in $r\delta$ into \eqref{eq:rd} for each $\delta\in (0,1]$. Further, if one is simply interested in existence of a solution (rather than existence within a particular set), its possible to optimize the value of $r$ so as to minimize the LHS of \eqref{eq:rd} for any given value of $u$. By doing this, we obtain the maximal set of $u$ for which existence of a solution is guaranteed. This leads to the following theorem, which is our main result on existence of solutions to \eqref{eq:Quad}:
\begin{theorem}\label{thm:MainCert}
Define the sets
\begin{subequations}\label{eq:Cert}
\begin{align}
&\Ucal^{i}_r(\delta)=\left\{u: r\delta h\br{u}+ g\br{u-u^\star}+\frac{1}{r\delta}e\br{u-u^\star}	\leq 1 \right\} \label{eq:CertConvex}\\
&\Ucal^i_r = \displaystyle\cupstar_{0< \delta \leq 1} \Ucal^i_r(\delta)
\end{align}
\end{subequations}
Then for each $u \in \Ucal^i_r$, there is a solution to \eqref{eq:Quad} with $\norm{x-x^\star}\leq r$. Further, $\Ucal^i_r(\delta)$ is a convex set and hence $\Ucal^i$ is a union of convex sets and $\Ucal^i$ has non-empty interior with $u^\star \in \mathrm{Int}\br{\Ucal^i_r}$. Further, the union $\cupstar_{0\leq r<\infty} \Ucal^i_r$ is given by:
\begin{align}
&\Ucal^{i}_\infty=\left\{u: 2\sqrt{h\br{u}e\br{u-u^\star}}+ g\br{u-u^\star}\leq 1 \right\} \label{eq:CertFinal}
\end{align}
and for each $u \in \Ucal^i_\infty$ there exists a solution to \eqref{eq:Quad}.
\end{theorem}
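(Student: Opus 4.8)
My plan is to prove the theorem in four stages: (i) existence of solutions for $u\in\Ucal^i_r$, (ii) convexity of the sets $\Ucal^i_r(\delta)$, (iii) the interior claim, and (iv) the reduction of the union over $r$ to the closed-form set $\Ucal^i_\infty$. Stage (i) follows almost immediately from Lemma \ref{lem:MainCert}: if $u\in\Ucal^i_r$ then $u\in\Ucal^i_r(\delta)$ for some $\delta\in(0,1]$, i.e. $r\delta\,h(u)+g(u-u^\star)+\frac{1}{r\delta}e(u-u^\star)\le 1$, and applying Lemma \ref{lem:MainCert} with radius $r\delta$ in place of $r$ produces a solution with $\norm{x-x^\star}\le r\delta\le r$.

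For stage (ii) I would show that each of $u\mapsto h(u)$, $u\mapsto g(u-u^\star)$ and $u\mapsto e(u-u^\star)$ is convex in $u$. The key structural observation is that $f(x^\star,u)$, $Q\qud{y}{y}{u}$ and $J_\star\br{u;y}$ are all \emph{linear} in $u$ (because $Q$ is trilinear, $L$ bilinear and $K$ linear). Hence $e$ is a norm of a linear function of $u$, while $h$ and $g$ are pointwise suprema over $\norm{y}\le 1$ of norms of affine functions of $u$; in each case convexity follows since a norm composed with an affine map is convex and a supremum of convex functions is convex. The left-hand side of \eqref{eq:CertConvex} is then a positive combination of convex functions, so its sublevel set $\Ucal^i_r(\delta)$ is convex. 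For stage (iii) I would evaluate the three functions at $u=u^\star$: using $J_\star\br{u^\star;y}=J_{\star\star}y$ gives $g(0)=0$ and $e(0)=0$, while $h(u^\star)$ is finite. The left-hand side of \eqref{eq:CertConvex} at $u=u^\star$ is thus $r\delta\,h(u^\star)$, which can be made strictly less than $1$ by choosing $\delta\in(0,1]$ small. Since this left-hand side is convex, hence continuous, the strict inequality persists on a neighborhood, placing $u^\star$ in $\mathrm{Int}\br{\Ucal^i_r(\delta)}\subseteq\mathrm{Int}\br{\Ucal^i_r}$.

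Stage (iv) is the crux. The main idea is to reparameterize by $t=r\delta$: as $r$ ranges over $(0,\infty)$ and $\delta$ over $(0,1]$, the product $t=r\delta$ sweeps out all of $(0,\infty)$, and each set $\Ucal^i_r(\delta)$ depends on $(r,\delta)$ only through $t$. Hence
\[\cupstar_{0\le r<\infty}\Ucal^i_r=\cupstar_{0<t<\infty}\left\{u:\,t\,h(u)+g(u-u^\star)+\tfrac{1}{t}e(u-u^\star)\le 1\right\}.\]
A point $u$ lies in this union iff $\inf_{t>0}\left[t\,h(u)+\tfrac{1}{t}e(u-u^\star)\right]+g(u-u^\star)\le 1$. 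Minimizing $t\,h(u)+\tfrac1t e(u-u^\star)$ over $t>0$ by the AM--GM inequality gives the value $2\sqrt{h(u)e(u-u^\star)}$, attained (when $h(u)>0$) at $t^\star=\sqrt{e(u-u^\star)/h(u)}$, which reproduces exactly the defining inequality of $\Ucal^i_\infty$ in \eqref{eq:CertFinal}. Existence of a solution for such $u$ then follows from stage (i) with $r=t^\star$ and $\delta=1$.

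The step I expect to be the main obstacle is the degenerate boundary behavior in stage (iv) when $h(u)=0$. Then the AM--GM minimizer is not attained, the infimum $0$ being approached only as $t\to\infty$, so the set identity must be argued with care: the minimum is attained only for $h(u)>0$, and a separate argument is needed to cover the boundary $2\sqrt{h(u)e(u-u^\star)}+g(u-u^\star)=1$ when $h(u)=0$. I would handle this by treating $h(u)>0$ and $h(u)=0$ separately, noting in the latter case that $h(u)=0$ forces $Q\qud{y}{y}{u}=0$ for all $y$ (by invertibility of $J_{\star\star}$ and homogeneity), which collapses \eqref{eq:Quad} to a linear system in $x$ whose unique solvability is guaranteed whenever $g(u-u^\star)<1$ via a Neumann-series argument, thereby verifying both the claimed set equality and the existence conclusion in each regime.
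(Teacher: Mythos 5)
Your stages (i)--(iii) are correct and are essentially the paper's own argument: the paper proves Lemma \ref{lem:MainCert} and Theorem \ref{thm:MainCert} in a single appendix proof, obtaining existence from Brouwer's theorem on the ball of radius $r\delta$, convexity from the fact that $h,g,e$ are norms (or suprema of norms) of functions affine in $u$, and the interior claim from continuity. Your interior argument is in fact cleaner than the paper's: the paper selects a $u$-dependent $\delta\br{u-u^\star}=\min\br{2e\br{u-u^\star}/r,1}$, which degenerates to $\delta=0\notin(0,1]$ exactly at $u=u^\star$, whereas your fixed small $\delta$ combined with continuity of the finite convex left-hand side avoids that glitch. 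The substantive difference is stage (iv): the paper's appendix proof contains \emph{no} argument for the $\Ucal^i_\infty$ claim at all (it is only motivated informally in the text preceding the theorem, by the remark that one may optimize over $r$), so your reparameterization $t=r\delta$ sweeping $(0,\infty)$ together with the AM--GM minimization is precisely the missing argument, and your observation that attainment of the infimum is the crux is correct.

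One caveat on your stage (iv): your concluding claim that the case split verifies ``both the claimed set equality and the existence conclusion in each regime'' overstates what the Neumann-series repair (which requires $g\br{u-u^\star}<1$) can deliver, because at the exact boundary $h\br{u}=0$, $e\br{u-u^\star}>0$, $g\br{u-u^\star}=1$ both claims are genuinely false. Concretely, take $n=1$, $k=2$, $f\br{x,u}=u_1x+u_2$ (so $Q\equiv 0$, $L\br{x,u}=u_1x$, $K\br{u}=u_2$), with nominal solution $x^\star=0$, $u^\star=\br{1,0}$ and $J_{\star\star}=1$. Then $h\equiv 0$, $g\br{u-u^\star}=|u_1-1|$, $e\br{u-u^\star}=|u_2|$, so the point $u=\br{0,1}$ satisfies $2\sqrt{h\br{u}e\br{u-u^\star}}+g\br{u-u^\star}=1$ and lies in $\Ucal^i_\infty$; yet $f\br{x,u}=1$ has no solution, and $u$ lies in no $\Ucal^i_r$. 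So on this degenerate ray the theorem itself fails as stated (it needs a strict inequality there, or an exclusion of the case $h=0$, $e>0$, $g=1$); this is a defect of the statement that your analysis exposes rather than a flaw of your approach. You also omit the mirror degenerate case $e\br{u-u^\star}=0$ with $h\br{u}>0$, where the infimum is approached only as $t\to 0$: there the set equality likewise fails at $g=1$, but the existence conclusion survives trivially, since $e\br{u-u^\star}=0$ means $f\br{x^\star,u}=0$, i.e., $x^\star$ itself is a solution at $u$.
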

\begin{proof}
See appendix section \ref{sec:ProofMainCert}.
\end{proof}

We now interpret the terms that appear in condition \eqref{eq:CertConvex}: \\
(1) $h(u)$ is a measure of how ``nonlinear'' the system is, since it bounds the \emph{gain of the quadratic term}, or the maximum norm of $Q\br{y,y,u}$ given inputs $y$ within the unit ball $\norm{y}\leq 1$. Thus, the more nonlinear the system is, the more difficult it is to satisfy \eqref{eq:CertConvex}. \\
(2) $J_\star\br{u;y}$ is equal to $\frac{\partial f}{\partial x} y$ evaluated at $\br{x^\star,u}$ and we have $f\br{x^\star+y,u}\approx f\br{x^\star,u}+J_\star\br{u;y}$.
       $J_{\star\star}$ represents the Jacobian of the system evaluated at the nominal solution $\br{x^\star,u^\star}$. Rescaling by the inverse of $J_{\star\star}$ ensures that our conditions are affine-invariant (invariant to an affine scaling of the equations). Thus, $\inv{J_{\star\star}}\br{f\br{x^\star,u}+J_\star\br{u;y}}\approx \inv{J_{\star\star}}f\br{x^\star+y,u}$.
       The terms $g\br{u-u^\star},e\br{u-u^\star}$ bound the deviation of the LHS of the above expression from the identity map $y \mapsto y$: $\inv{J_{\star\star}}\br{f\br{x^\star,u}+J_\star\br{u;y}}-y$.
       If $u=u^\star$, this deviation is $0$ and increases as $u$ moves away from $u^\star$ - thus, the quantities $e,g$ bound how much the first-order linear approximation of the nonlinear equations changes as a function of $u$.

\subsection{Tightness of the certificate} \label{sec:tightness}
In this section, we analyze the tightness of theorem \ref{thm:MainCert} for the special case when the quadratic and linear terms do not depend on $u$ Thus \eqref{eq:Quad} simplifies to:
\begin{align}
Q\br{x,x}+Lx+K\br{u}=0\label{eq:QuadSimple}
\end{align}
\begin{theorem}\label{thm:Converse}
Let $\br{x^\star,u^\star}$ satisfy \eqref{eq:QuadSimple}. Define $e,g,h$ as in theorem \ref{thm:MainCert} and let $h^\star=h\br{u}$ (since $Q$ is independent of $u$, so is $h$).
For any $\kappa \in (0,1)$, the system of equations \eqref{eq:QuadSimple} has a solution in the set $\mathcal{B}_\kappa=\left\{x:\norm{x-x^\star} \leq \frac{\kappa}{2h^*}\right\}$ if $u$ belongs to the set
\begin{align}
\mathcal{U}^i_\kappa=\left\{u:e\br{u-u^\star} \leq  \frac{2\kappa-\kappa^2}{4h^\star}\right\} \label{eq:CertSimple}
\end{align}
Conversely, if $u$ lies outside the set
\begin{align}
\Ucal^o_\kappa=\left\{u:e\br{u-u^\star} \leq \frac{2\kappa+\kappa^2}{4h^\star}\right\}\label{eq:CertSimpleNecc}
\end{align}
then \eqref{eq:QuadSimple} does not have a solution in $\mathcal{B}_\kappa$. Thus,
$\Ucal^i_\kappa \subseteq \Ucal_\kappa \subseteq \Ucal^o_\kappa$.
\end{theorem}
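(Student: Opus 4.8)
The plan is to treat both inclusions as consequences of a single exact rearrangement of the solvability equation, after first exploiting the structural simplification that $Q$ and $L$ are $u$-independent. Since in \eqref{eq:QuadSimple} neither the quadratic nor the linear term carries $u$, the Jacobian action reduces to $J_\star\br{u;y}=2Q\br{x^\star,y}+Ly=J_{\star\star}y$, which no longer depends on $u$. Consequently $g\br{u-u^\star}=\max_{\norm{y}\leq 1}\norm{\inv{J_{\star\star}}J_{\star\star}y-y}=0$ and $h\br{u}\equiv h^\star$ is constant, so the general certificate of Theorem \ref{thm:MainCert} collapses to a one-parameter inequality in $r$ involving only $h^\star$ and $e$.

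For the sufficient (inner) direction I would simply specialize Lemma \ref{lem:MainCert}. With $g=0$ and $h=h^\star$, condition \eqref{eq:rd} becomes $r h^\star+\tfrac{1}{r}e\br{u-u^\star}\leq 1$. Setting $r=\tfrac{\kappa}{2h^\star}$, the radius defining $\mathcal{B}_\kappa$, and solving for $e\br{u-u^\star}$ yields exactly $e\br{u-u^\star}\leq r-h^\star r^2=\tfrac{2\kappa-\kappa^2}{4h^\star}$, which is the defining inequality of $\Ucal^i_\kappa$. Hence every $u\in\Ucal^i_\kappa$ admits a solution in $\mathcal{B}_\kappa$, giving $\Ucal^i_\kappa\subseteq\Ucal_\kappa$.

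The converse (outer) direction is the heart of the argument, and here I would derive an \emph{exact} identity rather than an inequality. Writing a candidate solution as $x=x^\star+y$ and expanding $f$ using bilinearity of $Q$ together with $f\br{x^\star,u^\star}=0$, the equation $f\br{x^\star+y,u}=0$ rearranges to $J_{\star\star}y+Q\br{y,y}+\br{f\br{x^\star,u}-f\br{x^\star,u^\star}}=0$. Applying $\inv{J_{\star\star}}$ and isolating the $u$-dependent term gives $\inv{J_{\star\star}}\br{f\br{x^\star,u}-f\br{x^\star,u^\star}}=-\br{y+\inv{J_{\star\star}}Q\br{y,y}}$, so taking norms produces the key identity $e\br{u-u^\star}=\norm{y+\inv{J_{\star\star}}Q\br{y,y}}$. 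The triangle inequality and the homogeneity bound $\norm{\inv{J_{\star\star}}Q\br{y,y}}\leq h^\star\norm{y}^2$ then give $e\br{u-u^\star}\leq\norm{y}+h^\star\norm{y}^2\leq r+h^\star r^2=\tfrac{2\kappa+\kappa^2}{4h^\star}$ whenever $\norm{y}\leq r=\tfrac{\kappa}{2h^\star}$. Thus any solution in $\mathcal{B}_\kappa$ forces $u\in\Ucal^o_\kappa$; contrapositively, $u\notin\Ucal^o_\kappa$ rules out a solution in $\mathcal{B}_\kappa$, and combining the two directions yields $\Ucal^i_\kappa\subseteq\Ucal_\kappa\subseteq\Ucal^o_\kappa$.

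The main obstacle is less a technical difficulty than a conceptual one: recognizing that with $g=0$ the solvability equation admits the \emph{exact} decomposition $e\br{u-u^\star}=\norm{y+\inv{J_{\star\star}}Q\br{y,y}}$, whose upper bound mirrors the lower bound used in the sufficient direction, the two differing only in the sign of the quadratic term $h^\star r^2$. This sign-symmetry is precisely what produces the matched thresholds $\tfrac{2\kappa\mp\kappa^2}{4h^\star}$ and certifies that the inner and outer approximations sandwich $\Ucal_\kappa$ to within the quadratic correction, i.e. that the sufficient condition is nearly necessary.
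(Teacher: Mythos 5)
Your proposal is correct and follows essentially the same route as the paper: the inner inclusion specializes the general certificate (Lemma~\ref{lem:MainCert}/Theorem~\ref{thm:MainCert}) with $g=0$, $h\equiv h^\star$ and radius $r=\kappa/(2h^\star)$, and the outer inclusion uses the same rearrangement $\inv{J_{\star\star}}\br{f\br{x^\star,u}-f\br{x^\star,u^\star}}=-\br{y+\inv{J_{\star\star}}Q\br{y,y}}$ together with the triangle inequality and the homogeneity bound $\norm{\inv{J_{\star\star}}Q\br{y,y}}\leq h^\star\norm{y}^2$. The only difference is presentational: you phrase the converse as a clean contrapositive built on an exact norm identity, while the paper runs the identical estimate as a proof by contradiction.
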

\begin{proof}
See appendix section \ref{App:PfConverse}
\end{proof}
This result shows that the inner approximation $\mathcal{U}^i_\kappa$ is tight upto a factor,
in the sense that for any $u$ outside $\Ucal^o_\kappa$ (which is simply $\Ucal^i_\kappa$ scaled by a factor $\frac{2+\kappa}{2-\kappa}\leq 3$) cannot have a solution in $\mathcal{B}_\kappa$ (see figure \ref{fig:InnerOuter}). \begin{figure}[htb]
\centering \includegraphics[width=.4\columnwidth]{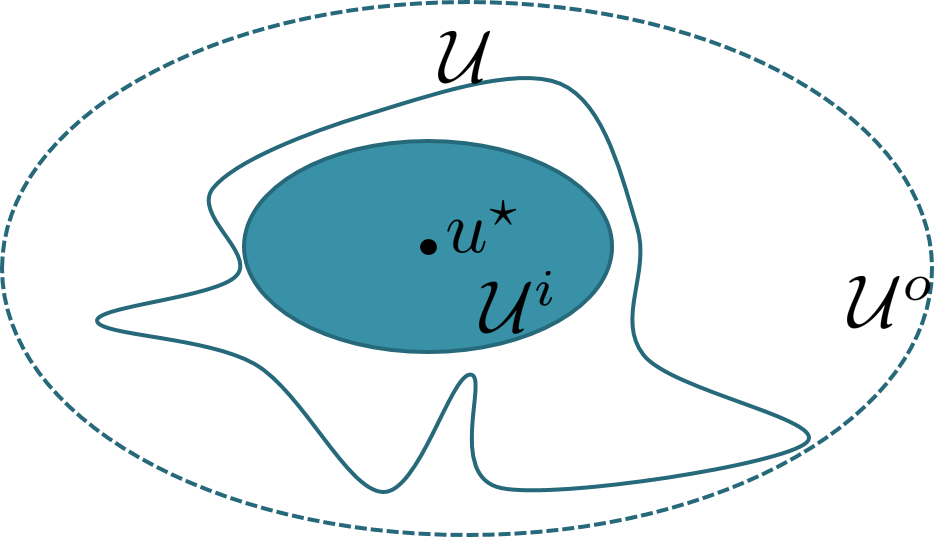}
\caption{Inner and outer approximations of $\Ucal$}\label{fig:InnerOuter}
\end{figure}

\subsection{Relationship to previous results on AC power flow}
In this section, we consider the special case of the AC power flow equations \eqref{eq:PF} and compare our result to the best-known previous result on inner approximations \cite{wang2016explicit}. We start with equation (4) from \cite{wang2016explicit}: $V=w+Z\inv{\mathrm{diag}\br{\herm{V}}}\herm{s}$ (this is simply a restatement of the power flow equations \eqref{eq:PF} introduced earlier with $Z=Y^{-1},w=-Y_{-1}Y_0$). We rewrite this system as a quadratic system in
$\gamma=\frac{w}{V}$ (we assume implicitly that we are not interested in solutions with $0$ voltage since this is not practical in a real power system):
\begin{align}
f\br{\gamma,s}=\diag{\gamma}\inv{\diag{w}} Z\inv{\herm{\diag{w}}}\diag{\herm{s}}\herm{\gamma}+\gamma-\mathbf{1}=0\label{eq:EPFLQuad}
\end{align}
where $\mathbf{1}$ is a vector with all entries equal to $1$ and $\diag{w}$ is a diagonal matrix with entries $w$. This is in the form studied in the complex extension of theorem \ref{thm:MainCert} (theorem \ref{thm:MainCertComplex} in appendix section \ref{sec:AppComplex}). Since the focus of this paper is creating a general framework to analyze quadratic systems and not on specific application to power systems, we do not provide details here. We simply state our final result on the solvability region computed by applying theorem \ref{thm:MainCertComplex} to \eqref{eq:EPFLQuad} with the nominal solution $\gamma^\star=\mathbf{1},s^\star=0$. Define $\zeta\br{s}=\inv{\diag{w}} Z\inv{\herm{\diag{w}}}\diag{\herm{s}}$. Then, \eqref{eq:EPFLQuad} has a solution if
\begin{align}
2\norm{\zeta\br{s}\mathbf{1}}_\infty+2\sqrt{\norm{\zeta\br{s}\mathbf{1}}_\infty\norm{\zeta\br{s}}_\infty} \leq 1 \label{eq:EPFLCertFinal}
\end{align}
If we use the bound $\norm{\zeta\br{s}\mathbf{1}}_\infty\leq \norm{\zeta\br{s}}_\infty$, this condition is implied by the condition $\norm{\zeta\br{s}}_\infty\leq \frac{1}{4}$ which is the region defined by corollary 1 in \cite{wang2016explicit}. Thus our analysis produces a stronger result than that from \cite{wang2016explicit}, at least for the particular case when $s^\star=0,\gamma^\star=\mathbf{1}$. Another advantage of our framework in this case is that we can obtain a region around any nominal solution $\br{\gamma^\star,s^\star}$ with nonsingular Jacobian, while \cite{wang2016explicit} requires a stronger assumption on the nominal solution (which can be stated as $\norm{\zeta\br{s^\star}}_\infty \leq \frac{1}{\norm{\gamma^\star}_\infty^2}$).

\subsection{Computational issues}
Thus far, we have assumed that the quantities $e\br{u-u^\star},h\br{u},g\br{u-u^\star}$ can be computed - in this section, we discuss the associated computational issues. We focus on the norm $\norm{\cdot}_\infty$ although similar ideas apply to other norms. $e\br{u-u^\star}$ is easy to compute since it simply involves taking the norm of an affine function of $u$. For any fixed value of $u$, $\inv{J_{\star\star}}J_\star\br{u;y}-y$ is a linear function of $y$ and is of the form $M^L\br{u}y$ where $M^L\br{u}$ is a matrix-valued function of $u$. Thus, $g\br{u-u^\star}=\norm{M^L\br{u}}_\infty$ where the norm stands for the induced matrix norm which is easy to compute. $h\br{u}$ involves maximizing the norm of a nonlinear and possibly nonconvex function of $x$, which is NP-hard in general. However, $h\br{u}$ can be written as $\max_{\norm{x}\leq 1}\norm{M^Q\br{u}\mathrm{vec}\br{xx^T}}_\infty$ where $\mathrm{vec}\br{xx^T}$ denotes all quadratic terms in $x$ ($x_1^2,x_1x_2,\ldots,x_1x_n,x_2^2,\ldots$) and $M^Q\br{u}$ is an affine matrix-valued function of $u$. Its easy to see that $\norm{\mathrm{vec}\br{xx^T}}_\infty\leq 1$ if $\norm{x}\leq 1$, so that $h\br{u}\leq \norm{M^Q\br{u}}_\infty$ showing that a bound on $h\br{u}$ can be computed easily. A tighter bound on $h\br{u}$ can be computed via convex relaxation - replacing the quadratic terms $xx^T$ with a symmetric matrix $X$, we can bound $h\br{u}$ by $\displaystyle\max_{X\succeq 0,\mathrm{diag}\br{X}\leq 1}\norm{M^Q\br{u}\mathrm{vec}\br{X}}_\infty$ which equals
$\displaystyle\max_{1\leq i \leq n}\max_{\sigma \in \{+1,-1\}} \displaystyle\max_{X\succeq 0,\mathrm{diag}\br{X}\leq 1}[ M^Q\br{u}\mathrm{vec}\br{X}]_i\sigma$
where the inner maximization can be computed by solving a semidefinite program (SDP) and the outer maximization requires enumerating $2n$ possibilities.

\section{Numerical results}\label{sec:NumRes}

In this section, we perform numerical studies applying the techniques from this paper to the AC power flow equations. 
We use the 18 bus distribution network taken from \cite{grady1992application} included with the {\rm MATPOWER} software \cite{zimmerman2011matpower}. 
We generate a random direction of perturbation $\hat{s}$ in the injection space and compute how far we can move along this direction before we violate \eqref{eq:EPFLCertFinal} - this is given by the largest scaling factor $t$ for which $t\hat{s}$ satisfies $\kappa\br{t\hat{s}}\leq 1$. We denote this value of $t$ by  $t_{\hat{s}}=\frac{1}{\kappa\br{\hat{s}}}$. We compare our results with the condition from \cite{wang2016explicit} (which in turn is known to generalize the results from \cite{yu2015simple} and \cite{bolognani2016existence}). When $\gamma^\star=\mathbf{1},s^\star=0$, the result corollary 1 in \cite{wang2016explicit} reduces to $\kappa^\prime(s)=4\norm{\zeta\br{s}}_\infty\leq 1$ so the farthest one can go in the direction $\hat{s}$ before hitting the boundary of this region is $t^\prime_{\hat{s}}=\frac{1}{\kappa^\prime\br{\hat{s}}}$. We use the ratio $\frac{t^\prime_{\hat{s}}}{t_{\hat{s}}}$
to compare our result \eqref{eq:EPFLCertFinal} and \cite{wang2016explicit}. If this number is larger than $1$, our certificate defines a larger region than \cite{wang2016explicit} along direction $\hat{s}$. 

We generate $10000$ random directions $\hat{s}$ and compute the ratio of $t_{\hat{s}}$ to $t^\prime_{\hat{s}}$. We plot a histogram of the ratio in figure \ref{fig:EPFLCompare} - the results show that our condition achieves an improvement of a factor of $2$ (our region is twice as large at least along the direction $\hat{s}$) or higher in most of the directions. We do a similar analysis comparing our certificate $t_{\hat{s}}$ to an upper bound on the largest scaling $t$ such that a solution to the equations \eqref{eq:EPFLQuad} does not exist. In order to obtain the upper bound, we consider a convex relaxation of \eqref{eq:EPFLQuad}:
\[t\br{\zeta\br{ \hat{s}}\odot \Gamma}\mathbf{1}+\gamma=\mathbf{1},\Gamma\succeq \herm{\gamma}\gamma^T\]
where $\odot$ denotes element-wise multiplication of matrices and $\succeq$ denotes inequality in the semidefinite sense. We define $t^\star_{\hat{s}}$ to be the smallest value of $t$ for which the relaxation has no solution (is infeasible). This number can be calculated by increasing the value of $t$ until the problem becomes infeasible. In figure \ref{fig:RelaxCompare}, we plot a histogram of $\frac{t_{\hat{s}}}{t^\star_{\hat{s}}}$ over $10000$ random directions $\hat{s}$. When the ratio is close to $1$, it indicates that the boundary of the region defined by \eqref{eq:EPFLCertFinal} is close to the true boundary (beyond which no solution exists to the PF equations). The results show that at least along some directions $\hat{s}$, the boundary of our certificate is close to the true boundary (the ratio is larger than $.8$) although in other directions it can be significantly away from the boundary. 
\begin{figure}[htb]
    \begin{subfigure}[b]{0.45\columnwidth}
\centering \includegraphics[width=.99\columnwidth]{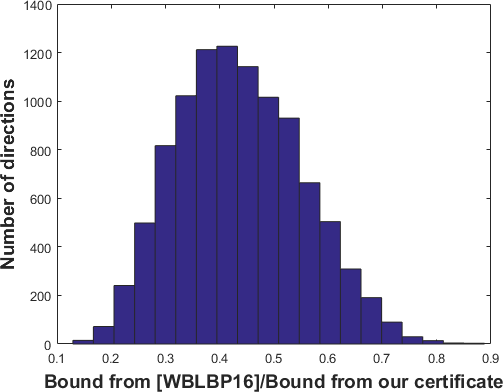}
\caption{Comparison of certificate \eqref{eq:EPFLCertFinal} and \cite{wang2016explicit} (histogram of $t^\prime_{\hat{s}}/t_{\hat{s}}$)}\label{fig:EPFLCompare}
    \end{subfigure}
        \begin{subfigure}[b]{0.45\columnwidth}
\centering \includegraphics[width=.99\columnwidth]{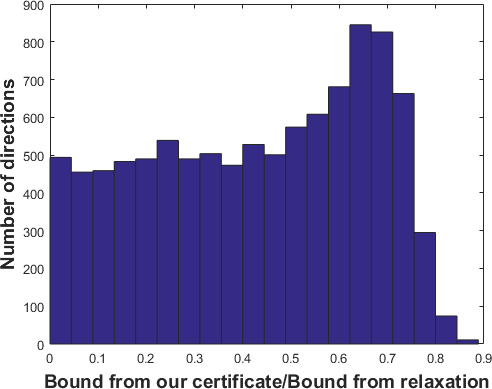}
\caption{Comparison of certificate \eqref{eq:EPFLCertFinal} and relaxation (histogram of $t_{\hat{s}}/t^\star_{\hat{s}}$)}\label{fig:RelaxCompare}
\end{subfigure}
\end{figure}

Finally, we plot a 2-dimensional project of the regions defined by $\kappa\br{s}\leq 1,\kappa^\prime\br{s}\leq 1$ and the relaxation. \begin{figure}
\centering
\includegraphics[width=.5\columnwidth]{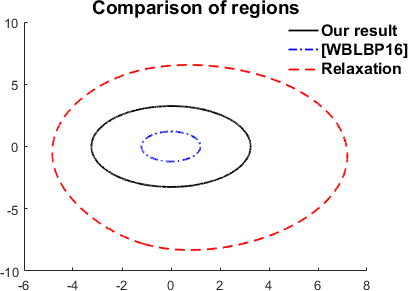}
\caption{2-dimensional projection of solvability regions}\label{fig:EPFLCompareRegions}
\vspace{-.7cm}
\end{figure}
We do this by picking a random direction $\hat{s}$, and rotating this direction by multiplying by a unit complex number: $\hat{s}\br{\theta}=\hat{s}\exp\br{\sqrt{-1}\theta}$ (with $\theta$ going from $0$ to $2\pi$).  For each value of $\theta$ we compute the maximum scalings $t_{\hat{s}\br{\theta}},t^\prime_{\hat{s}\br{\theta}},t^\star_{t_{\hat{s}\br{\theta}}}$ to determine the boundary of the region (see figure \ref{fig:EPFLCompareRegions}). The plot shows that our region is significantly larger than that from \cite{wang2016explicit} and close to the outer bound defined by the relaxation at least in some directions.


\section{Conclusions}\label{sec:Conc}
We have developed a general framework for computing inner approximations of the solvability regions of affinely parameterized quadratic systems of equations. The regions we construct are unions of convex sets in the space of parameters.  For the specific case of the AC power flow equations, we show that our work improves upon the best know bound from previous work \cite{wang2016explicit} both theoretically and numerically.

We hope that these results will serve as a basis for further investigations: We left the choice of norm open in this paper - preliminary results indicate that choosing the right norm is critical for constructing large inner approximations of the solvability region.  We will also explore applications of the regions to checking robust feasibility (with respect to uncertain parameters) and robust optimization (over controllable parameters) particularly in the context of infrastructure networks like power, water and gas grids. Generalizing the results of this paper to affinely parameterized nonlinear equations (beyond quadratic) and more general constraint sets on $x$ (beyond norm balls) is also an important and interesting direction for further work.

\bibliographystyle{alpha}
\bibliography{Ref}
\section{Appendix}
\subsection{Proof of lemma \ref{lem:MainCert} and theorem \ref{thm:MainCert}}\label{sec:ProofMainCert}
\begin{proof}
Let $\alpha=f\br{x^\star,u}$. We are interested in finding conditions on $u$ that guarantee the existence of solution to $Q\qud{x}{x}{u}+L\br{u}x+K\br{u}=0$. Using $\alpha=f\br{x^\star,u}$, we obtain
$Q\qud{x}{x}{u}-Q\qud{x^\star}{x^\star}{u}+L\br{u,x-x^\star}+\alpha=0$. Let $dx=x-x^\star$. Then, we obtain
\begin{align}
& 2Q\qud{x^\star}{dx}{u}+Q\qud{dx}{dx}{u}+L\br{u,dx}+\alpha =0 \nonumber \\
&=Q\qud{dx}{dx}{u}+J\br{dx,u}+\alpha\nonumber \\
&=Q\qud{dx}{dx}{u}+J\br{dx,u}-J\br{dx,u^\star}+\alpha+J_{\star\star} dx\nonumber
\end{align}
Rescaling the above equation by $\inv{J_{\star\star}}$, we define $G\br{dx}$ to equal:
$-\inv{J_{\star\star}}\br{J\br{dx,u}-J\br{dx,u^\star}+Q\qud{dx}{dx}{u}+\alpha}$
so that $f\br{x,u}=0$ is equivalent to $G\br{dx}=dx$. This equation is in fixed point form. We establish that $G$ maps the set $\{dx:\norm{dx}\leq r\delta\}$ onto itself and apply Brouwer's theorem (theorem \ref{thm:Brouwer}). $\norm{G\br{dx}}$ is bounded by
$\max_{\norm{dx}\leq r\delta}\norm{Q\qud{dx}{dx}{u}}+r\delta g\br{u-u^\star}+e\br{u-u^\star}$ which is smaller than $\delta^2 h\br{u}+\delta g\br{u-u^\star}+e\br{u-u^\star}$. We need $\norm{G\br{dx}}\leq r\delta$ for Brouwer's theorem to apply, or $(r\delta)^2 h\br{u}+r\delta g\br{u-u^\star}+e\br{u-u^\star}	\leq r\delta$. Dividing both sides by $\delta$, we obtain $r\delta h\br{u}+ g\br{u-u^\star}+\frac{1}{r\delta}e\br{u-u^\star}	\leq 1$.
This is precisely the condition that defines the set $\Ucal^i_r(\delta)$ in the theorem. Since we are required to find a solution with $\norm{dx}\leq r$ and the above condition guarantees existence within $\norm{dx}\leq r\delta$, for any value $\delta \in (0,1]$, every $u\in\Ucal^i_r(\delta)$ is guaranteed to have a solution with $\norm{dx}\leq r$.

Since $h,g,e$ are norms of affine functions of $u$ (or maximum over norms), each $u \in \Ucal^i_r(\delta)$ is convex. Further, choosing $\delta=\delta(u-u^\star)=\min(\frac{2e\br{u-u^\star}}{r},1)$, we know that a $u\in\Ucal^i_r$ if $r\delta(u-u^\star) h(u)+g\br{u-u^\star}\leq \frac{1}{2},e\br{u-u^\star}\leq \frac{r}{2}$. At $u=u^\star$, the LHS of both inequalities evaluates to $0$ and hence $u^\star \in \Ucal^i_r$. Further, since the LHS is a continuous function of $u$, there is a ball $\{u:\norm{u-u^\star} \leq t\}$ that is still contained in $\Ucal^i_r$. Hence $u^\star \in \mathrm{Int}\br{\Ucal^i_r}$.
\end{proof}

\subsection{Proof of theorem \ref{thm:Converse}}\label{App:PfConverse}
\begin{proof}
We apply theorem \ref{thm:MainCert} to the special case \eqref{eq:QuadSimple} - in this case $h\br{u}=h^\star=\max_{\norm{x}\leq 1}\norm{Q\br{x,x}}$ and $J$ does not depend on $u$, hence $g\br{u-u^\star}=0$. Thus, the condition for existence of a solution within the set $\norm{x-x^\star}\leq \delta$ becomes $\delta h^\star + \frac{e\br{u-u^\star}}{\delta}\leq 1$. We can choose $\delta=\frac{\kappa}{2h^\star}$ to obtain $
e\br{u-u^\star}\leq  \frac{2\kappa-\kappa^2}{4h^\star}$
to guarantee existence of a solution within $\mathcal{B}_\kappa$.
Conversely, suppose that $e\br{u-u^\star}> \frac{2\kappa+\kappa^2}{4h^\star}$ and that \eqref{eq:QuadSimple} has a solution in $\mathcal{B}_\kappa$. Using the derivation of the previous theorem, we arrive at $-\inv{J_{\star\star}}Q\br{dx,dx}-\inv{J_{\star\star}}\alpha=dx$.
Taking norms and applying the triangle equality, we get $
\norm{dx}+\norm{\inv{J_{\star\star}}Q\br{dx,dx}}\geq e\br{u-u^\star}$.
If $\norm{dx}\leq \frac{\kappa}{2h^\star}$, then we know that
$\norm{\inv{J_{\star\star}}Q\br{dx,dx}}\leq \frac{\kappa^2}{4h^\star}$.
Thus, we get $\frac{2\kappa+\kappa^2}{4h^\star}\geq e\br{u-u^\star}$
which is a contradiction to our assumption. Thus, if $e\br{u-u^\star}>\frac{\kappa+\kappa^2}{h^\star}$, there is no solution to \eqref{eq:QuadSimple} in $\mathcal{B}_\kappa$.
\end{proof}
\subsection{Complex equations}\label{sec:AppComplex}
We extend theorem \ref{thm:MainCert} to the complex case:
\begin{align}
f\br{x,u}=Q\qud{x}{\herm{x}}{u}+L\br{x,u}+K\br{u}=0\label{eq:QuadComplex}
\end{align}
where $Q,L,K$ are complex-valued trilinear/bilinear/linear forms in the complex variables $x \in \Com^n,u\in \Com^k$. Define
\begin{align*}
& J\br{x,y,u}=\left.\frac{\partial f}{\partial x}\right|_{x,u}y+\left.\frac{\partial f}{\partial \herm{x}}\right|_{x,u}\herm{y}, J_\star\br{y,u}=J\br{x^\star,y,u} \\
& J_{\star\star}=\begin{pmatrix}
                                                     \left.\frac{\partial f}{\partial x}\right|_{x^\star,u^\star} & \left.\frac{\partial f}{\partial \herm{x}}\right|_{x^\star,u^\star} \\
                                                     \vspace{-.3em} & \vspace{-.3em} \\
                                                     \herm{\left.\frac{\partial f}{\partial \herm{x}}\right|_{x^\star,u^\star}} & \herm{\left.\frac{\partial f}{\partial x}\right|_{x^\star,u^\star}}
                                                   \end{pmatrix},
 \inv{J_{\star\star}}=\begin{pmatrix}
                         M_\star & N_\star \\
                         \herm{N_\star} & \herm{M_\star}
                       \end{pmatrix}
                       \end{align*}
\begin{theorem}\label{thm:MainCertComplex}
Let $\br{x^\star,u^\star}$ satisfy \eqref{eq:QuadComplex} and suppose that $J_{\star\star}$ be invertible. Define the following quantities:
\begin{align*}
& e\br{u-u^\star}  =\norm{M_\star f\br{x^\star,u}+N_\star \herm{f\br{x^\star,u}}} \\
& h\br{u} =\max_{\norm{y}\leq 1} \norm{M_\star Q\qud{y}{y}{u}+N_\star\herm{Q\qud{y}{y}{u}}} \\
& g\br{u-u^\star}=\max_{\norm{y}\leq 1} \norm{M^\star J\br{y,u}+N^\star\herm{J\br{y,u}} -y}
\end{align*}
Define the sets $\Ucal^i_r\br{\delta},\Ucal^i_r,\Ucal^i_\infty$ as in theorem \ref{thm:MainCert}. Then for each $u \in \Ucal^i_r$, there is a solution to \eqref{eq:QuadComplex} with $\norm{x-x^\star}\leq r$. Further, $\Ucal^i_r(\delta)$ is a convex set and hence $\Ucal^i_r$ is a union of convex sets with $u^\star \in \mathrm{Int}\br{\Ucal^i_r}$. For each $u\in\Ucal^i_\infty$, there exists a solution $x$ to $f\br{x,u}=0$.
\end{theorem}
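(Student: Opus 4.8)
The plan is to run the same Brouwer-fixed-point argument used for Lemma \ref{lem:MainCert} and Theorem \ref{thm:MainCert}, adapted to the augmented (Wirtinger) formulation in which $x$ and $\herm{x}$ are treated as independent variables. First I would set $\alpha=f\br{x^\star,u}$ and $dx=x-x^\star$, and use trilinearity of $Q$ together with $f\br{x^\star,u^\star}=0$ to rewrite $f\br{x,u}=0$ as $Q\qud{dx}{\herm{dx}}{u}+J_\star\br{dx,u}+\alpha=0$. The cross terms $Q\qud{x^\star}{\herm{dx}}{u}+Q\qud{dx}{\herm{x^\star}}{u}+L\br{dx,u}$ collect exactly into $J_\star\br{dx,u}$, since that is the real-linear derivative of $f$ at $x^\star$ acting on $dx$; this is the verbatim complex analogue of the identity derived in the real proof.

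Next I would append the conjugate equation so that the part linear in $\br{dx,\herm{dx}}$ evaluated at $u^\star$ becomes $J_{\star\star}$ applied to the stacked vector $\br{dx,\herm{dx}}$, split that block off, and invert using $\inv{J_{\star\star}}=\left(\begin{smallmatrix} M_\star & N_\star \\ \herm{N_\star} & \herm{M_\star}\end{smallmatrix}\right)$. Reading off the first block row gives the fixed-point map $G\br{dx}=-\br{M_\star B+N_\star\herm{B}}$ with $B=Q\qud{dx}{\herm{dx}}{u}+\br{J_\star\br{dx,u}-J_\star\br{dx,u^\star}}+\alpha$, so that $f\br{x,u}=0$ is equivalent to $dx=G\br{dx}$. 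A small but essential check is self-consistency: the second block row of the inverted system equals $\herm{G\br{dx}}$ automatically, thanks to the conjugate-symmetric block structure of $\inv{J_{\star\star}}$, so $G$ may be treated as a genuine map $\Com^n\mapsto\Com^n$ rather than on the doubled space.

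I would then bound $\norm{G\br{dx}}$ over $\norm{dx}\leq r\delta$ by positive homogeneity. Writing $dx=r\delta y$ with $r\delta$ real (so conjugation commutes with the scaling), the quadratic block contributes $\br{r\delta}^2 h\br{u}$, the linear-deviation block contributes $r\delta\, g\br{u-u^\star}$ — here using that $M_\star J_\star\br{dx,u^\star}+N_\star\herm{J_\star\br{dx,u^\star}}=dx$, i.e. that $\inv{J_{\star\star}}$ inverts $J_{\star\star}$ on the augmented variable — and the constant block is exactly $e\br{u-u^\star}$. Hence $\norm{G\br{dx}}\leq\br{r\delta}^2 h\br{u}+r\delta\, g\br{u-u^\star}+e\br{u-u^\star}$; requiring this to be $\leq r\delta$ and dividing by $r\delta$ recovers the defining inequality of $\Ucal^i_r\br{\delta}$. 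Since the closed ball $\{dx:\norm{dx}\leq r\delta\}$ is compact and convex in $\Com^n\cong\R^{2n}$ and $G$ is continuous, Brouwer's theorem (Theorem \ref{thm:Brouwer}) yields a fixed point, hence a solution with $\norm{x-x^\star}\leq r\delta\leq r$.

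The remaining claims carry over unchanged from the real case: each $\Ucal^i_r\br{\delta}$ is convex because $h,g,e$ are norms of affine functions of $u$ (or maxima of such); the choice $\delta=\min\br{2e\br{u-u^\star}/r,1}$ shows $u^\star\in\mathrm{Int}\br{\Ucal^i_r}$; and taking the union over $\delta\in(0,1]$ and $r\in(0,\infty)$, so that $\rho=r\delta$ sweeps $(0,\infty)$, minimizing $\rho\, h+g+e/\rho$ at $\rho=\sqrt{e/h}$ gives $\Ucal^i_\infty$. The main obstacle I anticipate is purely in the augmented bookkeeping: verifying that the first block row of $\inv{J_{\star\star}}$ applied to the stacked residual is conjugation-consistent and collapses to a single $\Com^n$ fixed-point equation, and that the homogeneity bounds survive the $\herm{x}$-dependence — they do precisely because the scaling factor $r\delta$ is real, so it commutes with conjugation. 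Once these are in place, every estimate is identical in form to the real proof.
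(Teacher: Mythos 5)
Your proposal is correct and follows exactly the route the paper intends: the paper's own proof of Theorem~\ref{thm:MainCertComplex} is literally ``Similar to theorem~\ref{thm:MainCert},'' and your argument is precisely that adaptation --- the Brouwer fixed-point map obtained by inverting the augmented Jacobian, with the conjugate-consistency of the block structure of $\inv{J_{\star\star}}$ and the real-scaling homogeneity checks filled in. No gaps; the identity $M_\star J_\star\br{y,u^\star}+N_\star\herm{J_\star\br{y,u^\star}}=y$ and the collapse of the doubled system to a single $\Com^n$ fixed-point equation are exactly the details the paper leaves implicit.
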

\begin{proof}
Similar to theorem \ref{thm:MainCert}.
\end{proof}

\end{document}